\newtheorem{thm}{Theorem}
\newtheorem{lemma}[thm]{Lemma}
\theoremstyle{definition}
\newcommand{\R}{\mathbb{R}}
\newcommand{\F}{\mathbb{F}}
\newcommand{\E}{\mathbb{E}}
\newcommand{\C}{\mathbb{C}}
\DeclareMathOperator{\tr}{tr}
\DeclareMathOperator{\Cov}{Cov}
\newcommand{\inprod}[2]{\left\langle #1, #2 \right\rangle}
\newcommand{\abs}[1]{\left\vert #1 \right\vert}
\newcommand{\norm}[1]{\left\Vert #1 \right\Vert}
\DeclareMathOperator{\var}{Var}
\newcommand{\Set}[2]{\left\{#1 \mathrel{} \middle| \mathrel{} #2
  \right\}}
\DeclareMathOperator{\diag}{diag}
\newcommand{\Lgen}{\mathrm{L}}
\newcommand{\Mat}{M}
\newcommand{\ind}[1]{\mathbbm{1}_{#1}}
\author{David Grzybowski}
\address{Department of Mathematics, Applied Mathematics, and
  Statistics, Case Western Reserve University, 10900 Euclid Ave.,
  Cleveland, Ohio 44106, U.S.A.}
\email{drgrizy67@gmail.com}
\author{Mark Meckes}
\address{Department of Mathematics, Applied Mathematics, and
  Statistics, Case Western Reserve University, 10900 Euclid Ave.,
  Cleveland, Ohio 44106, U.S.A.}
\email{mark.meckes@case.edu}
\title[Stein's method, Markov processes, and random matrices]{Stein's
  method, Markov processes, and linear eigenvalue statistics of random
  matrices}
\begin{document}

\maketitle

\begin{abstract}
  We show how the infinitesimal exchangeable pairs approach to Stein's
  method combines naturally with the theory of Markov semigroups.  We
  present a multivariate normal approximation theorem for functions of
  a random variable invariant with respect to a Markov semigroup.
  This theorem provides a Wasserstein distance bound in terms of
  quantities related to the infinitesimal generator of the semigroup.
  As an application, we deduce a rate of convergence for Johansson's
  celebrated theorem on linear eigenvalue statistics of Gaussian
  random matrix ensembles.
\end{abstract}

\section{Introduction}
\label{S:intro}

Stein's method is a powerful and general method --- or more
accurately, collection of methods --- introduced by Charles Stein to
bound distances between probability distributions, and thus to prove
probabilistic limit theorems with convergence rates.  Stein's method
in various versions have seen rapid development and many exciting
applications over the last twenty-five years.  We refer to
\cite{Chatterjee-survey} for a brief survey of different approaches to
Stein's method and its history.

The approach favored by Charles Stein himself \cite{Stein-book} is the
\emph{exchangeable pairs} approach, in which one bounds the distance
from a random variable $W$ to a limit distribution by analyzing the
behavior of an exchangeable pair of random variables $(W,W')$.  This
approach is particularly well suited to settings where some kind of
symmetry is explicit, since symmetries can be used to construct
exchangeable pairs.  In \cite{Stein-techreport}, Stein adapted this
approach to a setting in which continuous or ``infinitesimal''
symmetries are present, to prove a version of a theorem of Johansson
\cite{Johansson1} on linear eigenvalue statistics for random unitary
matrices. This ``infinitesimal exchangeable pairs'' proof was later
developed in into a general method by E.\ Meckes, and generalized for
multivariate normal approximation by Chatterjee in E.\ Meckes
\cite{EM-thesis,EM-linear,ChMe,EM-Stein-multi}, with applications in
those papers and \cite{DoSt,Fulman2,FuRo,LaLeWe,EM-eigenfunctions},
among others.

In this paper we show how this infinitesimal exchangeable pairs method
for normal approximation fits naturally with the theory of Markov
diffusion processes.  In particular, when applied to a family of
exchangeable pairs $(X_0,X_t)$ generated by a stationary reversible
Markov process $\{X_t\}$, the hypotheses of the abstract normal
approximation theorems of \cite{EM-linear,ChMe,EM-Stein-multi} are
naturally rephrased in terms of the infinitesimal generator $\Lgen$ of
the process.  We formalize this in Theorem \ref{T:Stein-Markov} below,
which allows one to proceed by analyzing the behavior of $\Lgen$.  We
think of the result as a form of ``the method of exchangeable pairs,
without exchangeable pairs''.

This approach on the one hand facilitates working in settings lacking the
obvious geometric or algebraic symmetries present in many previous
applications, in favor of the more probabilistic symmetry represented
by stationarity of the Markov process.  On the other hand in carrying
out applications one is freed from detailed probabilistic thinking,
and effectively just has to do advanced calculus.

Theorem \ref{T:Stein-Markov} effectively unifies many proofs already
in the literature.  In fact, most if not all of the applications in
the literature of the infinitesimal exchangeable pairs approach can be
realized as applications of Theorem \ref{T:Stein-Markov}, including
the main results of
\cite{DoSt,Fulman2,FuRo,LaLeWe,EM-linear,EM-eigenfunctions}.

As a new application of this method, we present a short and elementary
new proof of a seminal result of Johansson on linear eigenvalue
statistics of Hermitian random matrices. Definitions of some of the
terms appearing in this statement will be given in Section
\ref{S:notation} below.

\begin{thm}[{cf.\ Johansson \cite{Johansson2}}]
  \label{T:Johansson} 
  Let $A \in M_n^{sa}(\C)$ be a GUE matrix (standard normal distribution
  on the space of Hermitian matrices). For each $k$ let
  \[
    W_k = \tr T_k\bigl(\tfrac{1}{2\sqrt{n}}A\bigr) - \E \tr
    T_k\bigl(\tfrac{1}{2\sqrt{n}}A\bigr),
  \]
  where $T_k$ are the Chebyshev polynomials of the first kind.
  
  Then 
  \[
    d_W \left((W_1, \dots, W_d), \bigl(\tfrac{1}{2}Z_1, \tfrac{\sqrt{2}}{2}
      Z_2, \dots, \tfrac{\sqrt{d}}{2} Z_d\bigr)
    \right) \le \frac{K_d}{n},
  \]
  where $Z_1, \dots, Z_d$ are independent standard normal random
  variables and $K_d > 0$ is a constant depending only on $K_d$, and
  $d_W$ denotes the $L^1$-Wasserstein distance.

  The same conclusion holds for the Gaussian Orthogonal Ensemble or
  Gaussian Symplectic Ensemble.
\end{thm}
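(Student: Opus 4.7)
The plan is to apply Theorem~\ref{T:Stein-Markov} to the matrix Ornstein--Uhlenbeck process on the space of Hermitian matrices whose stationary reversible distribution is the GUE; the GOE and GSE cases are treated in parallel using the analogous OU processes on real symmetric and self-dual quaternionic matrices. Viewed through the Hilbert--Schmidt inner product, each of these is just a linear OU process on a Euclidean space, with generator
\[
\Lgen f(A) = \Delta f(A) - \langle A, \nabla f(A)\rangle.
\]
In particular $\Lgen$ is entirely explicit, and all manipulations reduce to matrix calculus applied to polynomials in $A$.

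First I would compute $\Lgen \tr p\bigl(\tfrac{1}{2\sqrt{n}}A\bigr)$ for an arbitrary polynomial $p$. The drift piece contributes $-\tfrac{1}{2}\tr\bigl(\tfrac{A}{2\sqrt n}\, p'(\tfrac{A}{2\sqrt n})\bigr)$, while the Hilbert--Schmidt Laplacian produces a ``double-trace'' term built from the noncommutative second divided difference of $p$ evaluated at $(A/(2\sqrt n), A/(2\sqrt n))$, weighted by $1/n$. Specializing to $p = T_k$ and invoking the Chebyshev identity $(1-x^2)T_k''(x) - x T_k'(x) = -k^2 T_k(x)$, together with the fact that the semicircle law is the equilibrium measure of the induced one-point motion, one should find
\[
\Lgen W_k = -k\,W_k + R_k^{(n)},
\]
where $R_k^{(n)}$ has $L^2$-norm of order $1/n$ coming from the finite-$n$ discrepancy between the second divided difference and the honest second derivative. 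An analogous calculation for the carr\'e du champ
\[
\Gamma(W_j, W_k) := \tfrac{1}{2}\bigl(\Lgen(W_jW_k) - W_j\,\Lgen W_k - W_k\,\Lgen W_j\bigr)
\]
should yield $\E\,\Gamma(W_j,W_k) = \tfrac{k}{4}\delta_{jk} + O(1/n)$, matching the target covariance $\Cov\bigl(\tfrac{\sqrt j}{2}Z_j, \tfrac{\sqrt k}{2}Z_k\bigr) = \tfrac{k}{4}\delta_{jk}$. Feeding these estimates into Theorem~\ref{T:Stein-Markov} delivers the Wasserstein bound $K_d/n$.

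The main obstacle, I expect, is pinning down the error terms at the sharp $1/n$ rate rather than the $1/\sqrt n$ rate one might naively obtain. Two features are crucial here. First, the Chebyshev polynomials are the \emph{exact} eigenfunctions of the leading-order generator (the Jacobi-type operator on $[-1,1]$ associated to the semicircle weight), so no additional error arises from approximating eigenfunctions. Second, a structural cancellation tied to the orthogonality of the Chebyshev polynomials against the semicircle measure should eliminate what would otherwise be a $1/\sqrt n$ contribution to both $R_k^{(n)}$ and $\Gamma(W_j,W_k) - \tfrac{k}{4}\delta_{jk}$; the remaining bitrace polynomials in $A$ are controlled by elementary moment bounds for GUE eigenvalues. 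For GOE and GSE the algebra differs by the Dyson index $\beta \in \{1,4\}$, affecting the drift and Laplacian normalizations but not the qualitative structure of the argument, and the $1/n$ rate persists.
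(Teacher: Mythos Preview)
Your overall strategy coincides with the paper's: apply Theorem~\ref{T:Stein-Markov} to the Ornstein--Uhlenbeck process on $M_n^{sa}(\C)$, compute $\Lgen$ and $\Gamma$ on the $W_k$, and read off the Wasserstein bound. But two of your concrete claims are wrong, and one of them is the heart of the argument.

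First the minor point: your computation of the carr\'e du champ is off by a factor of $k$. With $f_k(x)=T_k(x/2)$ one has $f_k'(x)=\tfrac{k}{2}U_{k-1}(x/2)$, so $\E\,\Gamma(W_j,W_k)\approx \int f_j'f_k'\,d\sigma = \tfrac{jk}{4}\delta_{jk}=\tfrac{k^2}{4}\delta_{jk}$, not $\tfrac{k}{4}\delta_{jk}$. This matters because in Theorem~\ref{T:Stein-Markov} the identity is $\Gamma(F,F)\approx \Lambda\Sigma$, not $\Sigma$; the target covariance $\Sigma=\tfrac14\diag(1,\dots,d)$ emerges only after dividing by $\Lambda=\diag(1,\dots,d)$. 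Your two errors happen to cancel, but the reasoning is not right. (Your drift term also has a spurious $\tfrac12$.)

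The substantive gap is your justification that $\Lgen W_k = -kW_k + O(n^{-1})$. You invoke the Chebyshev differential equation $(1-x^2)T_k''-xT_k'=-k^2T_k$, but that identity has eigenvalue $-k^2$, not $-k$, and more importantly the leading-order action of $\Lgen$ on centered linear statistics is \emph{not} a Jacobi-type differential operator. From Lemma~\ref{Lem:GUE-derivs} and the semicircle law one finds that, modulo $O(n^{-1})$, $\Lgen$ acts on $f\mapsto\tr f(n^{-1/2}A)-\E$ via the integro-differential operator
\[
\mathcal{A}[f](x)=-xf'(x)+2\int\frac{f'(x)-f'(y)}{x-y}\,d\sigma(y),
\]
and it is not at all obvious from the Chebyshev ODE that $T_k(\cdot/2)$ is an eigenfunction of $\mathcal{A}$ with eigenvalue $-k$. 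The paper does not attempt a direct verification. Instead it observes from the monomial recursion that $\Lambda$ is lower triangular with diagonal $(1,2,\dots,d)$, and then proves $\Lambda$ is in fact diagonal by an indirect argument: diagonalize $\Lambda^T$ by eigenvectors $v^p$, form $H_p=\sum_i v^p_iF_i$, and use the symmetry of $\Lgen$ together with the already-established diagonal form of $\Gamma$ to force $v^p$ to be the $p$th standard basis vector. That step, not the Chebyshev ODE, is what closes the loop. Your anticipated ``$1/\sqrt{n}$ cancellation'' is also not really the issue: once one uses the variance bounds of \cite{MeSz} for polynomial linear statistics, all error terms are $O(n^{-1})$ without any special cancellation.
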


Our proof of Theorem \ref{T:Johansson} has three major advantages over
Johansson's original proof in \cite{Johansson2}.  Firstly, as is
typical for proofs via Stein's method, we automatically obtain a rate
of convergence, which was not present in \cite{Johansson2}; in this
case the $1/n$ rate is optimal.  Second, our proof is significantly
shorter and more elementary than Johansson's.  Finally, our proof
makes transparent precisely why Chebyshev polynomials of the first
kind are the correct test functions to diagonalize the covariance of
the limiting distribution.

Our methods are extremely flexible and can be applied to study linear
eigenvalue statistics for a variety of different random matrix
ensembles. Applications to several other random matrix ensembles, due
to the first-named author, appear in \cite{Grzybowski-thesis}, and
further applications will appear in forthcoming work.

The recent paper \cite{AnHeMaPo} (written after the results of this
paper were announced) introduced a closely related approach, combining
Markov diffusion semigroups with the approach to Stein's method via
the Malliavin calculus.  The authors of \cite{AnHeMaPo} apply their
techniques to investigate fluctuations of smooth linear statistics of
$\beta$-ensembles, and their results essentially include Theorem
\ref{T:Johansson} with the same rate of convergence.

Our exchange pairs approach and the Malliavin calculus approach of
\cite{AnHeMaPo} each have different advantages.  Although the proof in
\cite{AnHeMaPo} is also significantly shorter than Johansson's
original proof, it builds to some extent on Johansson's approach and
is much more technical than our proof of Theorem \ref{T:Johansson}.
Moreover, we believe that our approach would make it easier to
discover the basic form of results like Theorem \ref{T:Johansson} when
they are not already known (as exemplified by how our proof of Theorem
\ref{T:Johansson} ``discovers'' the Chebyshev polynomials, as seen in
Section \ref{S:GUE} below).

On the other hand, the Malliavin calculus approach utilized in
\cite{AnHeMaPo} appears to be better suited than the exchangeable
pairs approach for handling both more general linear statistics (i.e.,
replacing the polynomials in Theorem \ref{T:Johansson} with smooth
functions) and more general probability metrics (such as
$L^p$-Wasserstein metrics for $p > 1$).  In principle, it is possible
to extend Theorem \ref{T:Johansson} to more general test functions $f$
by approximating those test functions by linear combinations of
Chebyshev polynomials.  Such an approach was taken in the context of
random unitary matrices in \cite{DiEv,DoSt2}.  However, this would
require more precisely controlling the dependence of the constants
$K_d$ on $d$, and result in a loss in the rate of convergence, and we
do not pursue such a generalization in this paper.  Similarly, weaker
(nonoptimal) convergence rates in $L^p$-Wasserstein distance for $p>1$
can be deduced easily from convergence rates for $L^1$-Wasserstein
distance and measure concentration arguments.

Nevertheless, although we do not carry out the details here, in the
univariate case our methods can be used to prove the optimal
convergence rate in total variation distance for traces of individual
Chebyshev polynomials.

\subsection{Definitions and notation}
\label{S:notation}

We write $T_p$ and $U_p$ for $p \ge 0$ for the Chebyshev polynomials
of the first and second kind, respectively, with the index $p$
denoting the degree.  We recall that they are related by the
derivative formula $T_p' = p U_{p-1}$, and that the Chebyshev polynomials
of the second kind satisfy the orthogonality relation
\begin{equation} \label{E:U-orthogonality}
\int_{-1}^1 U_p(x) U_q(x) \sqrt{1-x^2} \ dx = \frac{\pi}{2} \delta_{p,q}.
\end{equation}
We denote by $\sigma$ the semicircle distribution, normalized to have
variance $1$:
\[
  d \sigma(x) = \frac{1}{2\pi} \ind{[-2, 2]}(x) \sqrt{4-x^2} \ dx,
\]
so that \eqref{E:U-orthogonality} can be restated as
\begin{equation} \label{E:semicircle-orthogonality}
  \int U_p(x/2) U_q(x/2) \ d\sigma(x) = \delta_{p,q}.
\end{equation}
These relations, and the fact that $T_p$ is a polynomial of degree
$p$, are the only facts about Chebyshev polynomials that will be
needed in our proof of Theorem \ref{T:Johansson}.

We write $\Mat_{n}(\F)$ for the space of $m \times n$ matrices over
the field $\F$ (where $\F = \R$ or $\C$).  We denote by
$\Mat_n^{sa}(\F)$ the subspace of Hermitian matrices.  Each of these
spaces is equipped with both the operator norm $\norm{\cdot}_{op}$ and
Hilbert--Schmidt norm $\norm{\cdot}_{HS}$, and the Hilbert--Schmidt
inner product $\inprod{A}{B}_{HS} = \tr (AB^*)$.  The standard
Euclidean norm on $\R^n$ is denoted $\abs{\cdot}$.

The $L^1$ Wasserstein distance between two probability measures $\mu$
and $\nu$ on $\R^d$ may be equivalently defined as
\[
d_W(\mu, \nu) = \inf \E \abs{X-Y} = \sup_f \abs{\int f \ d\mu - \int f
\ d\nu}
\]
where the infimum is over pairs of random vectors $X,Y \in \R^d$ with
$X \sim \mu$ and $Y \sim \nu$, and the supremum is over $1$-Lipschitz
functions $f: \R^d \to \R$.  As is usual, we abuse notation and write
$d_W(X,Y)$ for the Wasserstein distance between the distributions of
random vectors $X,Y \in \R^d$.

On any Euclidean space  (that is, a finite-dimensional real inner
product space), we denote by $\nabla$ the gradient operator and
$\Delta$ the Laplacian.

\subsection{Layout of this paper}

In Section \ref{S:Stein-Markov} we will present Theorem
\ref{T:Stein-Markov}, the abstract normal approximation theorem for
functions of Markov diffusion processes that is used to prove our main
results.  In Section \ref{S:GUE} we present in detail the proof of
Theorem \ref{T:Johansson}, by applying Theorem
\ref{T:Stein-Markov} to the Ornstein--Uhlenbeck process in the space
of Hermitian matrices.

Versions of these result and proofs previously appeared in the
first-named author's PhD thesis \cite{Grzybowski-thesis}.

\section{Stein's method and Markov processes}
\label{S:Stein-Markov}

We refer to \cite{BaGeLe} for background and definitions on Markov
processes and semigroups.  We will consider a stationary reversible
Markov process $\{X_t\}_{t \ge 0}$ on a space $S$ with invariant
measure $\mu$.  We denote by $\Lgen$ the infinitesimal generator for
the associated Markov semigroup, defined by
\[
  \Lgen f(x) = \lim_{t\to 0^+} \frac{\E [f(X_t) \mid X_0 = x] - f(x)}{t}
\]
for all $f$ for which this limit exists uniformly, and by
\[
  \Gamma (f,g) = \frac{1}{2} \bigl[ \Lgen(fg) - f\Lgen g - g \Lgen f\bigr]
\]
the associated carr\'e du champ operator. For vector-valued
$F,G: S \to \R^d$ we extend these to vector- and matrix-valued
operators
via
\[
  (\Lgen F)_j = \Lgen (F_j)
  \qquad \text{and} \qquad
  \Gamma(F,G)_{jk} = \Gamma(F_j, G_k).
\]

We emphasize that the role of the generator $\Lgen$ in this work is
completely different from in Barbour's \emph{generator approach} to
Stein's method \cite{Barbour1,Barbour2}.  Barbour's approach is built
around the generator of a process whose stationary distribution is the
limiting distribution (the Gaussian distribution $N(0,\Sigma)$ in our
setting), whereas here $\Lgen$ generates the distribution for an
underlying random object $X_0$, from which the random variables of
interest are constructed.

\begin{lemma}
  \label{Lem:Markov-conditionals}
  Let $\{X_t\}_{t\ge 0}$ be an $S$-valued Markov process with
  infinitesimal generator $\Lgen$ and carr\'e du champ operator
  $\Gamma$, and let $f, g:S \to \R$ be in the domain of $\Lgen$. Then
  \[
    \lim_{t \to 0^+} \frac{1}{t}\E \left[ f(X_t) - f(X_0)\middle\vert X_0\right]
    = \Lgen f(X_0)
  \]
  and
  \[
    \lim_{t \to 0^+} \frac{1}{t}\E \left[ \bigl(f(X_t) - f(X_0)\bigr)
      \bigl(g(X_t) - g(X_0)\bigr) \middle\vert X_0\right] = 2\Gamma
    (f, g) (X_0).
  \]
\end{lemma}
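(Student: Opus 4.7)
The plan is to reduce both statements to the defining difference quotient of $\Lgen$ applied to the Markov semigroup $P_t f(x) := \E[f(X_t) \mid X_0 = x]$, together with the algebraic identity defining $\Gamma$.

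For the first identity, the Markov property gives $\E[f(X_t) \mid X_0] = P_t f(X_0)$, so
\[
  \frac{1}{t} \E\bigl[f(X_t) - f(X_0) \bigm| X_0\bigr] = \frac{P_t f(X_0) - f(X_0)}{t},
\]
and letting $t \to 0^+$ yields $\Lgen f(X_0)$ directly from the definition of $\Lgen$. The uniformity of the defining limit in $x$ is precisely what allows evaluation at the random point $X_0$.

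For the second identity, I would expand the product and apply the Markov property to obtain
\begin{align*}
  &\E\bigl[\bigl(f(X_t) - f(X_0)\bigr)\bigl(g(X_t) - g(X_0)\bigr) \bigm| X_0\bigr] \\
  &\qquad = P_t(fg)(X_0) - g(X_0) P_t f(X_0) - f(X_0) P_t g(X_0) + f(X_0) g(X_0).
\end{align*}
The key step is to reorganize the right-hand side as the linear combination of three semigroup differences
\[
  \bigl[P_t(fg)(X_0) - (fg)(X_0)\bigr] - g(X_0) \bigl[P_t f(X_0) - f(X_0)\bigr] - f(X_0) \bigl[P_t g(X_0) - g(X_0)\bigr],
\]
divide by $t$, and apply the first part of the lemma (with $fg$, $f$, and $g$ respectively) to each bracketed quantity. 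The resulting limit is $\Lgen(fg)(X_0) - g(X_0) \Lgen f(X_0) - f(X_0) \Lgen g(X_0)$, which equals $2 \Gamma(f, g)(X_0)$ by the definition of the carr\'e du champ.

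The only real obstacle is that this argument implicitly requires $fg$ (in addition to $f$ and $g$) to lie in the domain of $\Lgen$, so that the difference quotient for $P_t(fg)$ converges. This is a standard compatibility condition on the test function algebra in the diffusion framework of \cite{BaGeLe}, and should be read as part of the hypothesis of the lemma.
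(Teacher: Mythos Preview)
Your proof is correct and follows essentially the same route as the paper: the first identity is immediate from the definition of $\Lgen$, and the second follows by expanding the product into the three increments $\bigl(f(X_t)g(X_t)-f(X_0)g(X_0)\bigr) - f(X_0)\bigl(g(X_t)-g(X_0)\bigr) - g(X_0)\bigl(f(X_t)-f(X_0)\bigr)$ and applying the first part to each piece. Your remark that $fg$ must also lie in the domain of $\Lgen$ is a point the paper leaves implicit.
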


\begin{proof}
  The first claim follows immediately from the definition of
  $\Lgen$. The second follows from the definition of $\Gamma$ after
  rewriting
  \begin{multline*}
    \bigl(f(X_t) - f(X_0)\bigr) \bigl(g(X_t) - g(X_0)\bigr) \\
    = \bigl(f(X_t) g(X_t) - f (X_0) g (X_0) \bigr)
    - f(X_0) \bigl(g(X_t) - g(X_0) \bigr)
    - g(X_0) \bigl(f(X_t) - f(X_0) \bigr).
    \qedhere
  \end{multline*}
\end{proof}

Using Lemma \ref{Lem:Markov-conditionals}, we can apply the
multivariate infinitesimal version of Stein's method of exchangeable
pairs in \cite[Theorem 4]{EM-Stein-multi}\footnote{Prof.\ C.\ Houdr\'e
  has pointed out to us that there is an error in the statement and
  proof of Lemma 2 of \cite{EM-Stein-multi}, which is corrected in
  \cite[Remark 4.2]{ArHo}.  This correction results in a slight change
  in a constant that appears in \cite[Theorem 3]{EM-Stein-multi}, but
  does not affect the statement of \cite[Theorem 4]{EM-Stein-multi}
  used here.}  to the family of exchangeable pairs $(X_0, X_t)$.  This
yields the following result, which we can think of as a version of
``Stein's method of exchangeable pairs without exchangeable pairs''.

\begin{thm}
  \label{T:Stein-Markov}
  Let $\{X_t\}_{t\ge 0}$ be a stationary reversible $S$-valued Markov
  process with infinitesimal generator $\Lgen$ and carr\'e du champ
  operator $\Gamma$, and let $F:S \to \R^d$ be in the domain of
  $\Lgen$ satisfying $\E F(X_0) = 0$.  Suppose that there exist
  \begin{itemize}
  \item an invertible deterministic matrix $\Lambda \in M_d(\R)$,
  \item a positive semidefinite deterministic matrix
    $\Sigma \in M_d^{sa}(\R)$,
  \item an $X_0$-measurable random vector $E_1 \in \R^d$, and
  \item an $X_0$-measurable random matrix $E_2 \in M_d(\R)$
  \end{itemize}
  such that:
  \begin{gather}
    \Lgen F(X_0) = - \Lambda F(X_0) + E_1,  \label{E:Lf} \\
    \Gamma(F,F)(X_0) = \Lambda \Sigma + E_2, \label{E:Gamma}
  \end{gather}
  and for each $\rho > 0$,
  \begin{equation}
    \label{E:Lindeberg}
    \lim_{t \to 0^+} \frac{1}{t} \E \left[ \abs{F(X_t) -
        F(X_0)}^2 \ind{\abs{F(X_t) -F( X_0)} > \rho}\right] = 0.
  \end{equation}
  Then for each $g \in C^2(\R^d)$,
  \[
    \abs{\E g(F(X_0)) - \E g(Z_\Sigma)} \le \norm{\Lambda^{-1}}_{op}
    \left[\norm{ \abs{\nabla g} }_\infty \E \abs{E_1} + \frac{1}{2}
      \norm{ \norm{D^2 g}_{HS} }_\infty \E \norm{E_2}_{HS}\right],
  \]
  where $Z_\Sigma \sim N(0,\Sigma)$ and $D^2 g$ denotes the Hessian
  matrix of $g$.

  If $\Sigma$ is nonsingular, then moreover
  \[
    d_W(F(X_0), Z_\Sigma) \le \norm{\Lambda^{-1}}_{op}
    \left[ \E \abs{E_1} + \norm{\Sigma^{-1/2}}_{op} \E \norm{E_2}_{HS}\right].
  \]
\end{thm}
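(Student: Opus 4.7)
The plan is to apply the multivariate infinitesimal exchangeable pairs theorem \cite[Theorem 4]{EM-Stein-multi} to the family of pairs $(F(X_0), F(X_t))_{t > 0}$, using Lemma \ref{Lem:Markov-conditionals} to translate the Markov semigroup hypotheses \eqref{E:Lf}--\eqref{E:Gamma} into the conditional-moment hypotheses of that theorem. The starting observation is that for each $t > 0$ the pair $(X_0, X_t)$ is exchangeable: stationarity gives $X_0 \stackrel{d}{=} X_t \sim \mu$, and reversibility is exactly the statement that $(X_0, X_t) \stackrel{d}{=} (X_t, X_0)$. Pushing forward by $F$, the $\R^d$-valued pair $(F(X_0), F(X_t))$ is exchangeable, which is the abstract setting of the infinitesimal exchangeable pairs method.

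Applying Lemma \ref{Lem:Markov-conditionals} componentwise, the first-moment identity combined with \eqref{E:Lf} reads
\[
\lim_{t \to 0^+} \frac{1}{t} \E\bigl[F(X_t) - F(X_0) \bigm| X_0\bigr] = \Lgen F(X_0) = -\Lambda F(X_0) + E_1,
\]
and the second-moment identity combined with \eqref{E:Gamma} reads
\[
\lim_{t \to 0^+} \frac{1}{t} \E\bigl[(F(X_t) - F(X_0))(F(X_t) - F(X_0))^T \bigm| X_0\bigr] = 2 \Lambda \Sigma + 2 E_2.
\]
These are precisely the linear-regression and variance hypotheses of \cite[Theorem 4]{EM-Stein-multi}. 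That theorem is typically stated with error terms $E_1, E_2$ that are measurable with respect to $F(X_0)$ rather than $X_0$, but the weaker $X_0$-measurability assumed here suffices: conditioning further via the tower rule and invoking Jensen's inequality yields $\E \abs{\E[E_1 \mid F(X_0)]} \le \E \abs{E_1}$ and similarly $\E \norm{\E[E_2 \mid F(X_0)]}_{HS} \le \E \norm{E_2}_{HS}$. The Lindeberg-type assumption needed by \cite[Theorem 4]{EM-Stein-multi} is exactly \eqref{E:Lindeberg}, so that theorem applies and delivers the smooth-function bound stated in the conclusion.

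For the Wasserstein bound in the nonsingular case, I would invoke the standard regularity estimates for the multivariate Stein equation associated to $N(0,\Sigma)$: a $1$-Lipschitz test function $g$ admits a Stein solution $U_g$ with $\norm{\nabla U_g}_\infty \le 1$ and with $\norm{D^2 U_g}_{HS}$ bounded by a multiple of $\norm{\Sigma^{-1/2}}_{op}$. Substituting these bounds into the $C^2$ estimate and taking the supremum over $1$-Lipschitz $g$ produces the second inequality. The main obstacle, to the extent there is one, is purely bookkeeping: verifying that the constants coming from the Stein solution estimates and from the passage between $X_0$- and $F(X_0)$-measurable error terms combine exactly into the stated bound. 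The genuine content of the theorem lies not in this bookkeeping but in the dictionary supplied by Lemma \ref{Lem:Markov-conditionals}, which identifies the infinitesimal conditional moments of an exchangeable pair built from a reversible Markov process with $\Lgen$ and $\Gamma$ evaluated at the starting point.
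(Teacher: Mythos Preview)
Your proposal is correct and follows exactly the route the paper takes: the paper's entire argument is the observation that reversibility and stationarity make $(X_0,X_t)$ exchangeable, so Lemma~\ref{Lem:Markov-conditionals} converts \eqref{E:Lf}--\eqref{E:Gamma} into the hypotheses of \cite[Theorem~4]{EM-Stein-multi}, from which both conclusions follow. Your remarks on the $X_0$- versus $F(X_0)$-measurability of the error terms and on the Stein-solution bounds behind the Wasserstein estimate are accurate elaborations of what is implicit in that citation.
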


When $d=1$ one can deduce a version of Theorem
\ref{T:Stein-Markov} with a bound on the total variation distance by
using a modification of \cite[Theorem 1]{EM-linear} in place of
\cite[Theorem 4]{EM-Stein-multi}.  (It is necessary to slightly
generalize \cite[Theorem 1]{EM-linear} to accommodate a nonzero error
term $E_1$ in condition \eqref{E:Lf}.)  For the sake of brevity, we
will pursue this in the present paper.

As mentioned in the introduction, most if not all of the applications
in the literature of \cite[Theorem 4]{EM-Stein-multi} and its
immediate precedents in \cite{EM-linear,ChMe} can be realized as
applications of Theorem \ref{T:Stein-Markov}.  We note in particular
that when the process $\{X_t\}_{t\ge 0}$ is Brownian motion on a
compact Riemannian manifold $S$, whose infinitesimal generator is the
Laplace--Beltrami operator $\Delta_S$, then condition \eqref{E:Lf} of
Theorem \ref{T:Stein-Markov} is trivially satisfied with a diagonal
matrix $\Lambda$ and $E_1 = 0$ if the components of $F$ are
eigenfunctions of $\Delta_S$.  This immediately suggests conjecturing
the results on such eigenfunctions proved in
\cite{EM-eigenfunctions,EM-Stein-multi}, and indeed produces more
streamlined proofs of those results.

The Lindeberg-like condition \eqref{E:Lindeberg} in Theorem
\ref{T:Stein-Markov} is typically easy to dispense with in
applications by establishing the stronger condition
\begin{equation}
  \label{E:3rd-moment}
  \lim_{t \to 0^+} \frac{1}{t}\E \abs{F(X_t) - F(X_0)}^3 = 0.
\end{equation}
In our present applications \eqref{E:3rd-moment} follows easily from
measure concentration results; see \cite[Theorem
3.6]{Grzybowski-thesis} for a more general sufficient condition.
Theorem \ref{T:Stein-Markov} thus essentially reduces the normal
approximation of $F(X_0)$ to computation of $\Lgen F$ and
$\Gamma(F,F)$, and estimation of some related expectations.

\section{Linear eigenvalue statistics for the GUE}
\label{S:GUE}

In this section we prove Theorem \ref{T:Johansson} by applying
Theorem \ref{T:Stein-Markov} to an Ornstein--Uhlenbeck
process. For brevity, we will focus on the case of the Gaussian
Unitary Ensemble.  Only minor adjustments are needed in the cases of
the GOE and GSE.

On an arbitrary Euclidean space the Ornstein--Uhlenbeck process
process has generator and carr\'e du champ
\begin{equation}
  \label{E:OU-L-Gamma}
  \Lgen f(x) = \Delta f(x) - \inprod{x}{\nabla f(x)}
  \qquad \text{and} \qquad
  \Gamma(f,g) = \inprod{\nabla f}{\nabla g}
\end{equation}
and the stationary distribution is the standard Gaussian distribution.
We will consider this process on the Euclidean space $\Mat_n^{sa}(\C)$
equipped with the inner product $\inprod{A}{B}_{HS}$, so that $X_0$
has the distribution of the Gaussian Unitary Ensemble.

We will first need the following elementary derivative
computations. Here are below we will adopt the notations
\[
  f^{\tr}(A) = \tr f(A)
  \qquad \text{and} \qquad
  \widetilde{f}^{\tr}(A) = \tr f (n^{-1/2}A)
\]
for $f: \R \to \R$ and $A \in \Mat_n^{sa}(\C)$.

\begin{lemma}
  \label{Lem:GUE-derivs}
  Given a polynomial function $f:\R \to \R$, 
  $\nabla f^{\tr}(A) = f'(A)$ and $\nabla \widetilde{f}^{\tr}(A) =
  n^{-1/2} f'(n^{-1/2}A)$.

  If $g_p(x) = x^p$, then
  \[
    \Delta g_p^{\tr}(A) = p \sum_{k=0}^{p-2} \tr(A^k) \tr(A^{p-2-k})
  \]
  and
  \[
    \Delta \widetilde{g}_p^{\tr}(A) = \frac{p}{n} \sum_{k=0}^{p-2}
    \widetilde{g}_k^{\tr}(A) \widetilde{g}_{p-2-k}^{\tr}(A).
  \]
\end{lemma}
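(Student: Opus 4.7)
The plan is to verify each formula by direct computation, reducing to the monomials $g_p(x) = x^p$ and extending by linearity. For the gradient I identify $\nabla F(A)$ via the duality $\inprod{\nabla F(A)}{B}_{HS} = \frac{d}{dt}\big|_{t=0} F(A+tB)$ for $B \in \Mat_n^{sa}(\C)$. Expanding $\tr(A+tB)^p = \tr A^p + t\,p\,\tr(A^{p-1}B) + O(t^2)$ (cyclicity of the trace collapses the $p$ first-order terms into one) gives $\inprod{\nabla g_p^{\tr}(A)}{B}_{HS} = p\,\tr(A^{p-1}B) = \inprod{g_p'(A)}{B}_{HS}$, so $\nabla g_p^{\tr}(A) = g_p'(A)$, and the polynomial case follows by linearity. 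The scaled formula is then an immediate consequence of the chain rule for $A \mapsto n^{-1/2}A$.

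For the Laplacian the strategy is to compute $\Delta F(A) = \sum_s \partial_{e_s}^2 F(A)$ for any orthonormal basis $\{e_s\}$ of $\Mat_n^{sa}(\C)$. Continuing the Taylor expansion of $\tr(A+tB)^p$ to second order and using cyclicity plus reindexing to collect equivalent summands gives $\partial_B^2 g_p^{\tr}(A) = p\sum_{k=0}^{p-2} \tr(B A^k B A^{p-2-k})$. Summing over $s$ therefore reduces the problem to the linear-algebraic identity $\sum_s \tr(e_s X e_s Y) = \tr(X)\tr(Y)$, applied with $X = A^k$ and $Y = A^{p-2-k}$.

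This identity is the main obstacle. I would prove it by plugging in an explicit orthonormal basis of $\Mat_n^{sa}(\C)$: the diagonal matrix units $E_{aa}$ for $a = 1, \dots, n$, together with $\tfrac{1}{\sqrt{2}}(E_{ab}+E_{ba})$ and $\tfrac{i}{\sqrt{2}}(E_{ab}-E_{ba})$ for $a<b$. Using $\tr(E_{\alpha\beta} X E_{\gamma\delta} Y) = X_{\beta\gamma} Y_{\delta\alpha}$, the diagonal terms contribute $\sum_a X_{aa} Y_{aa}$, while for each $a<b$ the two off-diagonal basis elements together contribute $X_{aa}Y_{bb}+X_{bb}Y_{aa}$; the total is $\sum_{a,b} X_{aa} Y_{bb} = \tr(X)\tr(Y)$. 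Conceptually, $\Unitary{n}$-invariance of $\Delta$ forces $\Delta g_p^{\tr}$ to be a class function of $A$ polynomial of degree $p-2$, hence a linear combination of products $\tr(A^k)\tr(A^{p-2-k})$; the basis computation pins down the coefficients.

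Finally, the scaled Laplacian is a one-line consequence: $\widetilde{g}_p^{\tr}(A) = n^{-p/2} g_p^{\tr}(A)$ and linearity of $\Delta$ give $\Delta \widetilde{g}_p^{\tr}(A) = n^{-p/2} \Delta g_p^{\tr}(A)$, and rewriting each $\tr(A^k) = n^{k/2}\widetilde{g}_k^{\tr}(A)$ collects the powers of $n$ into a single prefactor $n^{-1}$ independent of the summation index $k$, yielding the claimed formula.
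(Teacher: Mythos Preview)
Your proof is correct and follows essentially the same approach as the paper: both use the Taylor expansion of $\tr(A+tB)^p$, the cyclic property of the trace, and the same explicit orthonormal basis of $\Mat_n^{sa}(\C)$, with the key identity appearing in the paper as $\sum_{H\in\mathcal{B}} HBH = \tr(B) I_n$ (the trace of this against $Y$ is exactly your $\sum_s \tr(e_s X e_s Y) = \tr(X)\tr(Y)$). The only difference is cosmetic: you perform the symmetrization $k \leftrightarrow p-2-k$ on the second directional derivative before summing over the basis, while the paper does it afterward.
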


\begin{proof}
  For $H \to 0$ we have the expansion
  \begin{align*}
    \tr(A+H)^p - \tr A^p
    & = \tr \sum_{j=0}^{p-1} A^j H A^{p-1-j}
      + \tr \sum_{j=0}^{p-2} \sum_{k=0}^{p-2-j} A^j H A^k H A^{p-2-j-k} + O(\norm{H}_{HS}^3).
  \end{align*}
  The first gradient expression immediately follows since
  \[
    \tr \sum_{j=0}^{p-1} A^j H A^{p-1-j} = \inprod{p A^{p-1}}{H}
  \]
  by the cyclic property of the trace, and the second gradient
  expression follows by the chain rule.

  Now let $\mathcal{B}$ be the orthonormal basis
  \[
    \mathcal{B} = \Set{E_{jj}}{1 \le j \le n} \cup
    \Set{\frac{E_{jk}+E_{kj}}{\sqrt{2}}}{1 \le j < k \le n} \cup
    \Set{\frac{i(E_{jk}-E_{kj})}{\sqrt{2}}}{1 \le j < k \le n}
  \]
  of $\Mat_n^{sa}(\C)$, where $E_{jk} = e_j e_k^*$. It is straightforward
  to check that
  \[
    \sum_{H \in \mathcal{B}} HBH = \tr(B) I_n
  \]
  for any $B \in \Mat_n^{sa}(\C)$. Combining this with the
  second-order terms in the asymptotic expansion above yields
  \begin{align*}
    \Delta g_p^{\tr} & = 2\sum_{j=0}^{p-2} \sum_{k=0}^{p-2-j} \tr(A^k)
    \tr(A^{p-2-k}) = 2\sum_{k=0}^{p-2} (p-1-k) \tr(A^k) \tr(A^{p-2-k}) \\
    & = p \sum_{k=0}^{p-2} \tr(A^k) \tr(A^{p-2-k}).
  \end{align*}
  Here the second equality follows by reversing the order of the sums,
  and the third follows by averaging the previous expression with the
  corresponding version obtained by replacing $k$ by $p-2-k$.

  The second Laplacian expression follows from the first by the chain rule.
\end{proof}

The above computation of the Laplacian of a linear eigenvalue
statistic is the only part of the proof that needs to be adjusted for
the GOE and GSE; see \cite{Grzybowski-thesis} for details.

Our proof of Theorem \ref{T:Johansson} will require a rate of
convergence for expectations of linear eigenvalue statistics.  This
convergence rate is essentially known, but in order to illustrate how
self-contained our approach is we will give a new, easy
Markov process-based proof.

\begin{lemma}
  \label{Lem:ss-rate}
  Let $A$ have the distribution of the GUE.  Then for each integer
  $p \ge 0$, 
  \[
    \E \tr \bigl(\tfrac{1}{\sqrt{n}} A\bigr)^{2p} = nC_p + O(1)
  \]
  for $n \to \infty$, where $C_p = \frac{1}{p+1} \binom{2p}{p}$ is the
  $p^\mathrm{th}$ Catalan number, and
  $\E \tr \bigl(\tfrac{1}{\sqrt{n}} A\bigr)^{2p+ 1} = 0$.

  Consequently, 
  \[
    \abs{\E \widetilde{f}^{\tr} ( A ) - n \int f \
      d\sigma} \le K_f
  \]
  for any polynomial $f$.
\end{lemma}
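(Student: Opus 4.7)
The plan is to use the fact that the GUE distribution is the invariant
measure of the matrix Ornstein--Uhlenbeck process on $\Mat_n^{sa}(\C)$,
so that $\E \Lgen h(X_0) = 0$ for every $h$ in the domain of $\Lgen$.
Combining \eqref{E:OU-L-Gamma} with Lemma \ref{Lem:GUE-derivs} and the
identity $\inprod{A}{\nabla \widetilde{g}_p^{\tr}(A)}_{HS} = p\,
\widetilde{g}_p^{\tr}(A)$ gives
\[
  \Lgen \widetilde{g}_p^{\tr}(A) = \frac{p}{n} \sum_{k=0}^{p-2}
  \widetilde{g}_k^{\tr}(A)\, \widetilde{g}_{p-2-k}^{\tr}(A) - p\,
  \widetilde{g}_p^{\tr}(A).
\]
Writing $a_p := \E \widetilde{g}_p^{\tr}(X_0)$, the stationarity
identity $\E \Lgen \widetilde{g}_p^{\tr}(X_0) = 0$ becomes the moment
recursion
\[
  a_p = \frac{1}{n} \sum_{k=0}^{p-2} \E\bigl[\widetilde{g}_k^{\tr}(X_0)\,
  \widetilde{g}_{p-2-k}^{\tr}(X_0)\bigr],
\]
and the vanishing of $a_p$ for odd $p$ is immediate from the
distributional symmetry $X_0 \stackrel{d}{=} -X_0$.

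I would then establish $a_{2p} = nC_p + O(1)$ by strong induction on
$p$, with base cases $a_0 = a_2 = n$.  The crux is to show that each
joint moment $\E[\widetilde{g}_k^{\tr} \widetilde{g}_{2p-2-k}^{\tr}]$
in the recursion differs from $a_k\, a_{2p-2-k}$ by at most $O(1)$,
i.e.\ that the relevant covariances are $O(1)$.  Here the Markov
semigroup framework is convenient: the Gaussian Poincar\'e inequality
on $\Mat_n^{sa}(\C)$ together with the carr\'e du champ computation
$\Gamma(\widetilde{g}_p^{\tr}, \widetilde{g}_p^{\tr})(A) = \frac{p^2}{n}
\widetilde{g}_{2p-2}^{\tr}(A)$ (from \eqref{E:OU-L-Gamma} and Lemma
\ref{Lem:GUE-derivs}) gives $\var(\widetilde{g}_p^{\tr}(X_0)) \le
\frac{p^2}{n} a_{2p-2}$; a crude a priori estimate $a_{2q} = O(n)$
(valid for all $q$) then gives $\var(\widetilde{g}_p^{\tr}) = O(1)$
and, via Cauchy--Schwarz, $O(1)$ bounds on all covariances.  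Since
$k + (2p-2-k) = 2p-2$ is even, the two summand indices always have the
same parity.  The odd-odd summands are pure covariances (both factors
have mean zero), each $O(1)$; the even-even summands at $k = 2j$ equal
$a_{2j}\, a_{2(p-1-j)} + O(1) = (nC_j + O(1))(nC_{p-1-j} + O(1)) + O(1)
= n^2 C_j C_{p-1-j} + O(n)$ by the inductive hypothesis.  Dividing by
$n$ and applying the Catalan recursion $\sum_{j=0}^{p-1} C_j C_{p-1-j}
= C_p$ completes the induction.

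For a polynomial $f = \sum_k c_k x^k$, linearity yields $\E
\widetilde{f}^{\tr}(A) = \sum_j c_{2j}(nC_j + O(1)) = n \int f\,
d\sigma + O(1)$, since the unit-variance semicircle has $2j$-th
moment $C_j$ and vanishing odd moments.  The main obstacle is
supplying the a priori bound $a_{2q} = O(n)$ uniformly in $q$ that
launches the Poincar\'e argument; this can be obtained from standard
Gaussian concentration estimates for the operator norm of the GUE
(using $\abs{\widetilde{g}_k^{\tr}(A)} \le n\, \norm{n^{-1/2}A}_{op}^k$)
or by a more careful bootstrap of the recursion itself.
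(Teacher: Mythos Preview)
Your approach is essentially identical to the paper's: both derive the moment recursion from $\E\Lgen\widetilde{g}_p^{\tr}(X_0) = 0$, control the covariances via Cauchy--Schwarz from an $O(1)$ variance bound, and close the strong induction with the Catalan recursion. The only difference is the source of the variance bound---the paper cites \cite{MeSz} for $\var\widetilde{g}_p^{\tr}(A) = O(1)$, whereas you obtain it from the Gaussian Poincar\'e inequality $\var(\widetilde{g}_p^{\tr}) \le \frac{p^2}{n} a_{2p-2}$ together with an a priori $a_{2q} = O(n)$ bound via operator-norm concentration; both routes are valid, and yours is arguably more in keeping with the Markov-semigroup spirit of the argument.
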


\begin{proof}
   Write $g_p(x) = x^p$.  Lemma \ref{Lem:GUE-derivs} implies that for
  $p \ge 2$, 
  \begin{equation} \label{E:GUE-L}
    \Lgen g_p^{\tr}(A)  = \Delta g^{\tr}_p(A) - \inprod{p A^{p-1}}{A}_{HS} 
    = p \sum_{k=0}^{p-2} g^{\tr}_k(A) g^{\tr}_{p-2-k}(A) - p g^{\tr}_p(A)
  \end{equation}

  It follows from the definition of the infinitesimal generator
  $\Lgen$ that $\E \Lgen f (X_0)= 0$ for any stationary Markov process
  and any $f$ in the domain of $\Lgen$.  Applied to \eqref{E:GUE-L},
  this implies that
  \[
    \E \widetilde{g}^{\tr}_p(A) = \frac{1}{n} \sum_{k=0}^{p-2} \E \widetilde{g}^{\tr}_k(A)
    \widetilde{g}_{p-2-k}^{\tr}(A).
  \]
  Corollary 2 of \cite{MeSz} implies that
  $\var \widetilde{g}^{\tr}_p(A) = O(1)$, so that
  \[
    \abs{\Cov\bigl(\widetilde{g}^{\tr}_k(A), \widetilde{g}^{\tr}_{p-2-k}(A)\bigr)}
    \le \sqrt{\var \widetilde{g}^{\tr}_p(A) \var \widetilde{g}^{\tr}_{p-2-k}(A)} = O(1),
  \]
  and thus
  \begin{equation} \label{E:GUE-expectation-recursion}
    \E \widetilde{g}^{\tr}_p(A) = \frac{1}{n}\sum_{k=0}^{p-2} \E \widetilde{g}^{\tr}_k(A) \E
    \widetilde{g}^{\tr}_{p-2-k}(A) + O(1).
  \end{equation}

  By the symmetry of the distribution of $A$, we have $\E \widetilde{g}^{\tr}_p(
  A) = 0$ whenever $p$ is odd.  If we write $M_{n,p} = n^{-1} \E
  \widetilde{g}^{\tr}_{2p}(A)$, then \eqref{E:GUE-expectation-recursion}
  simplifies to
  \begin{equation*} 
    M_{n,p} = \sum_{k=0}^{p-1} M_{n,k} M_{n,p-1-k} + O(n^{-1})
  \end{equation*}
  for $p \ge 1$.  Since $M_{n,0} = C_0 = 1$ and the Catalan numbers satisfy the
  recursion relation
  \begin{equation}\label{E:Catalan-recursion}
    C_p = \sum_{k=0}^{p-1} C_k C_{p-1-k},
  \end{equation}
  it follows by induction that $M_{n,p} = C_p + O(n^{-1})$, which
  completes the proof of the first statement.

  The second statement then follows from the fact that
  \[
    \int x^k \ d\sigma(x) = \begin{cases} C_{k/2} & \text{ if $k$ is
        even}, \\ 0 & \text{ if $k$ is odd.} \end{cases}
    \qedhere
  \]
\end{proof}

As discussed in the introduction, one advantage of our approach to
Theorem \ref{T:Johansson} via Theorem \ref{T:Stein-Markov} is that
it makes clear how the Chebyshev polynomials arise naturally.  Before
beginning the proof of Theorem \ref{T:Johansson} we will give the
intuition behind this part of the argument.

In the setting of Theorem \ref{T:Stein-Markov}, recall that the
generator $\Lgen$ is symmetric on $L^2(\mu)$, where $\mu$ is the
stationary distribution (i.e., the distribution of
$X_0$). Eigenfunctions of $\Lgen$ corresponding to distinct
eigenvalues are then orthogonal in $L^2(\mu)$.  Since
also
\[
  \E \Gamma(f,g) (X_0) = - \E \bigl[f(X_0) \Lgen g(X_0)\bigr]
\]
by symmetry, it follows that if $f$ and $g$ are eigenfunctions of
$\Lgen$ corresponding to distinct eigenvalues, then $f$ and $g$ are
also orthogonal with respect to the bilinear form $\E \Gamma$.  Thus
the operator $\Lgen$ and the bilinear form $\E \Gamma$ are
simultaneously diagonalized by a basis consisting of such
eigenfunctions.  So if we choose the components of $F$ to be
eigenfunctions of $\Lgen$ with distinct eigenvalues, then the matrix
$\Lambda$ will be diagonal, and our natural candidate for
$\Lambda \Sigma$ is also diagonal, so that $\Sigma$ is diagonal as
well.

In the setting of Theorem \ref{T:Johansson} the eigenfunctions of
$\Lgen$ are not linear eigenvalue statistics in general.  However, it
transpires that linear eigenvalue statistics can be found that are
good approximations to eigenfunctions of $\Lgen$, and that these
approximate eigenfunctions are more easily identified by analyzing
$\Gamma$ rather than $\Lgen$.  Combining Lemma \ref{Lem:GUE-derivs}
with the fact that the semicircle law $\sigma$ is the limiting
spectral distribution (as shown here in Lemma \ref{Lem:ss-rate}) shows
that if $f$ and $g$ are polynomials, then
\[
  \E \Gamma(\widetilde{f}^{\tr},\widetilde{g}^{tr})(X_0) \approx \int f'
  g' \ d\sigma.
\]
Our natural candidates to diagonalize both $\Lambda$ and $\Sigma$ are
therefore polynomials whose derivatives are orthogonal in
$L^2(\sigma)$.  The orthogonal polynomials for $\sigma$ are
$U_p(x/2)$, leading us to their antiderivatives (up to normalization),
$T_p(x/2)$.

\begin{proof}[Proof of Theorem \ref{T:Johansson}]
  Implementing the strategy described above, we begin by considering
  the carr\'e du champ operator for the Ornstein--Uhlenbeck process on
  $\Mat_n^{sa}(\C)$. Let $f$ and $g$ be polynomials.  By
  \eqref{E:OU-L-Gamma} and Lemma \ref{Lem:GUE-derivs} we have
  \begin{equation*}
    \label{E:GUE-Gamma}
    \Gamma(\widetilde{f}^{\tr},\widetilde{g}^{tr})(X) = \frac{1}{n}
    \tr \bigl(f'(n^{-1/2}X) g'(n^{-1/2}X)\bigr),
  \end{equation*}
  where we write $X$ instead of $X_0$ to simplify the notation.
  Lemma \ref{Lem:ss-rate} then implies that
  \begin{equation*}
    \E \Gamma(\widetilde{f}^{\tr},\widetilde{g}^{tr})(X) = \int f' g' \ d\sigma
    + O(n^{-1}),
  \end{equation*}
  while \cite[Corollary 2]{MeSz} implies that
  \[
    \E \abs{\Gamma(\widetilde{f}^{\tr},\widetilde{g}^{tr})(X) - \E
      \Gamma(\widetilde{f}^{\tr},\widetilde{g}^{tr})(X) } = O(n^{-1}),
  \]
  and so
  \begin{equation}
    \label{E:GUE-E-Gamma}
    \E \abs{\Gamma(\widetilde{f}^{\tr},\widetilde{g}^{tr})(X) -
      \int f' g' \ d\sigma} = O(n^{-1}).
  \end{equation}
  
  If we now define
  \[
    f_p(x) = T_p(x/2) - \E \tr T_p\left(\frac{1}{2\sqrt{n}} X\right)
  \]
  for $p \ge 1$, then by \eqref{E:GUE-E-Gamma} and
  \eqref{E:semicircle-orthogonality},
  \[
    \E \abs{ \Gamma(\widetilde{f}_p^{\tr}, \widetilde{f}_q^{\tr})(X)
      - \frac{p^2}{4} \delta_{p,q}} = O(n^{-1}).
  \]
  We therefore define $F$ with components $F_p = \widetilde{f}_p^{\tr}$
  for $1 \le p \le d$, and it follows that
  \begin{equation}\label{E:Gamma-GUE}
    \Gamma (F,F)(X) = \frac{1}{4} \diag(1^2, 2^2, \dots, d^2) + E_2,
  \end{equation}
  where $E_2 \in M_d(\R)$ is a random matrix with $\E \norm{E_2}_{HS}
  = O(n^{-1})$.  (Here and below we leave the dependence of $O$
  expressions on $d$ implicit.)

  We now turn to the analysis of the infinitesimal generator $\Lgen$.
  Again denote $g_p(x) = x^p$.  By \eqref{E:OU-L-Gamma} and Lemma
  \ref{Lem:GUE-derivs},
  \begin{equation} \label{E:L-powers}
    \begin{split}
      \Lgen \widetilde{g}_p^{\tr} (X)
      & = \frac{p}{n} \sum_{k=0}^{p-2}
      \widetilde{g}_k^{\tr}(X) \widetilde{g}_{p-2-k}^{\tr}(X) - n^{-1/2}
      \inprod{X}{g'_p(X)} \\
      & = - p \widetilde{g}_p^{\tr}(X) - (2 - \delta_{p,2}) p
      \widetilde{g}^{\tr}_{p-2}(X) + \frac{p}{n} \sum_{k=1}^{p-3}
      \widetilde{g}_k^{\tr}(X) \widetilde{g}_{p-2-k}^{\tr}(X).
    \end{split}
  \end{equation}
  (Here we consider a sum with no valid summands to be $0$, and also
  set $\widetilde{g}^{\tr}_k = 0$ if $k < 0$.) 

  Denote
  $G_p(X) = \widetilde{g}_p^{\tr}(X) - \E \widetilde{g}_p^{\tr}(X)$.
  By Lemma \ref{Lem:ss-rate}, \eqref{E:Catalan-recursion}, and
  \cite[Corollary 2]{MeSz}, \eqref{E:L-powers} reduces to 
  \begin{equation} \label{E:L-centered-powers}
    \Lgen G_p(X) = -p G_p + 2p \sum_{k=0}^{\lfloor p/2 \rfloor - 1}
    C_k G_{p-2(k+1)} + O(n^{-1}).
  \end{equation}
  Here the $O(n^{-1})$ term stands for a random variable whose $L^1$
  norm is $O(n^{-1})$.

  Since the Chebyshev polynomials up to degree $d$ are a basis of the
  space of polynomials of degree at most $d$, it follows that
  $F_1, \dots, F_d$ span the space of mean-zero polynomial linear
  eigenvalue statistics of $X$ of degree at most $d$.  In particular,
  $F_p$ is a linear combination of $G_1, \dots, G_p$, and also
  $G_k$ is a linear combination of $F_1, \dots, F_k$.  From this and
  \eqref{E:L-centered-powers} it follows that
  \begin{equation} \label{E:L-F}
    \Lgen F_p(X) = -p F_p(X) - \sum_{k=1}^{p-1} \Lambda_{p,k} F_k(X) +
    O(n^{-1}) 
  \end{equation}
  for some constants $\Lambda_{p,k}$.  If we complete these constants
  to a matrix $\Lambda \in M_d(\R)$ by setting $\Lambda_{pp} = p$ and
  $\Lambda_{pk} = 0$ for $k > p$, then condition \eqref{E:Lf} holds
  for a random vector $E_1$ with $\E \abs{E_1} = O(n^{-1})$.

  We claim that $\Lambda$ is in fact diagonal.  Since $\Lambda$ is
  lower triangular, its eigenvalues are its diagonal entries
  $1, 2, \dots, d$.  For each $1\le p \le d$, let
  $v^p = (v^p_1,\dots,v^p_d)$ be an eigenvector of $\Lambda^T$ with
  eigenvalue $p$, and define $H_p = \sum_{i=1}^d v^p_i F_i$.  Then
  \begin{equation*} \begin{split}
      LH_p(X) & = \sum_{i=1}^d
      v^p_i \left(-\sum_{j=1}^d \Lambda_{ij} F_j(X) + E_{1,i}\right)
      = - \sum_{j=1}^d F_j(X) (\Lambda^T v^p)_j + \inprod{v}{E_1} \\
      & = - p \sum_{j=1}^d v^p_j F_j(X) + \inprod{v}{E_1} = -p H_p(X) +
      O(n^{-1}).
    \end{split}
  \end{equation*}
  From this it follows that
  \[
    \E \Gamma(H_p, H_q)(X) = - \E \bigl[H_p(X) \Lgen H_q(X)] = p \E \bigl[H_p(X)H_q(X)\bigr] +
    O(n^{-1}) 
  \]
  and similarly
  \[
    \E \Gamma(H_p, H_q)(X) = q \E \bigl[ H_p(X)H_q(X) \bigr] + O(n^{-1}),
  \]
  so if $p\neq q$ then $\E
  \Gamma(H_p, H_q)(X) = O(n^{-1})$.

  On the other hand, \eqref{E:Gamma-GUE} implies that
  \[
    \Gamma(H_p, H_q)(X) = \frac{1}{4} \sum_{k=1}^d k^2 v_k^p v_k^q +
    O(n^{-1}).
  \]
  Combining these observations, we have that
  \[
    \sum_{k=1}^d k^2 v_k^p v_k^q = 0
  \]
  whenever $p \neq q$.  Note that $v^1$ is a scalar multiple of
  the first standard basis vector.  It now follows by an easy
  induction argument that each $v^p$ is a scalar multiple of the
  $p^\mathrm{th}$ standard basis vector, and thus $\Lambda$ is
  diagonal.  More precisely, $\Lambda = \diag(1, 2, \dots, p)$.

  We have thus shown that conditions \eqref{E:Lf} and \eqref{E:Gamma}
  in Theorem \ref{T:Stein-Markov} hold with
  \[
    \Lambda = \diag(1, 2, \dots, p), \qquad
    \Sigma = \frac{1}{4} \diag(1, 2, \dots, p),
  \]
  and $E_1, E_2$ satisfying $\E \abs{E_1} = O(n^{-1})$ and
  $\E \norm{E_2}_{HS} = O(n^{-1})$.
    
  To deduce Theorem \ref{T:Johansson} it remains only to verify
  \eqref{E:Lindeberg}, which we will do using the third moment
  condition \eqref{E:3rd-moment}.  For this we note that for the
  Ornstein--Uhlenbeck process, conditional on $X_0$, $X_t$ is
  distributed as $e^{-t} X_0 + \sqrt{1-e^{-2t}} Z$, where $Z$ is a
  standard Gaussian random vector (GUE in our present setting).  Using
  this representation, \cite[Corollary 2]{MeSz} implies that
  \[
    \E \bigl[\abs{F_p(X_t) - F_p(X_0) }^3\big\vert X_0\bigr] \le C
    (1-e^{-2t})^{3/2}
  \]
  for each $p$, which implies that \eqref{E:3rd-moment} holds, and
  completes the proof.  
\end{proof}

\bibliographystyle{plain}

\bibliography{johansson-roc}

\end{document}